\newtheorem{thm}{Theorem}
\theoremstyle{definition}
\newtheorem{definition}{Definition}
\theoremstyle{remark}
\newtheorem{remark}{Remark}
\newcommand{\Z}{\mathbb Z}
\title{Cobordism invariants for knots with two indices}
\author{V.O.Manturov \footnote{Moscow Institute of Physics and Technology, Kazan Federal University,
}}
\begin{document}
\maketitle

\begin{abstract}
We construct an invariant of virtual knots which is a sliceness obstruction
and sensitive to the $\Delta$-move.

This invariants works if $\Z_{2}\oplus \Z_{2}$-index of chords is present.
\end{abstract}

Keywords: knot, invariant, virtual knot, parity, sliceness, group. 

AMS MSC: 57M25,57M27

\section{Preliminaries}
We work with any knot theory where two independent parities are present\footnote{This may happen, for example, when
we consider knots in the thickened torus}. We are interested in crossings which are
odd in at least one of those parities. There are three options for such crossings, which we call {\em crossings of types $a,b,c$;
these are three non-trivial elements of the group $\Z_{2}\oplus \Z_{2}$.}
For types $a,b,c$ the most important property that we
shall use will be: if crossings of two different indices (say, $a,b$) take part in a Reidemeister-3 move then the third crossing
has the third possible index $(c)$.

Those chords (and crossings) which are trivial with respect to both parities will be called {\em chords (crossings) of trivial type (index).} 

The invariants will first be constructed for knots with a reference point (long knots) but then we'll see
what happens if we move a reference point through a crossing.
The invariant for compact knots remains meaningful if some parity condition for chords of types $a$,$b$,$c$ is satisfied.

\subsection{Why is this construction important}

For the present time, there are lots of constructions of virtual knot invariant which work as follows:
we charge each crossing with some information (writhe number, parity, index, homotopical index, etc.) 
and then take a certain sum over all (non-trivial) crossings of sum function depending on these types, indices etc
(see \cite{Cheng,IMN}).
These invariants by definition turn out to be invariant under the $\Delta$-move.

Here we are constructing an invariant of ``group-type'' which is manifestly sensitive to the $\Delta$-move.

\subsection{Acknowledgements}

I am extremely grateful to Seongjeong Kim for various discussion during the preparation of this text.

The work of V.O.Manturov was funded by the development program of the Regional
Scientific and Educational Mathematical Center of the Volga Federal District, agreement
N 075-02-2020.

\section{The construction}

From now on we work only for a knot theory with two parities and over/under information at crossings.

Consider a Gauss diagram $D$ of a long knot. We are interested in chords of indices $a,b,c$.

         Now, we shall associate letters $a,a',A,A'$ to chord ends for chords of index $a$.
Similarly, we associate $b,b',B,B'$ chords of index $b$. We don't associate any letters with primes to chords of index $C$ at all
[however we keep in mind that they exist: small ones ($c$) and capital ones ($C$)].

         To be more precise, we use small letters (with or without primes) for undercrossing ends,
and we use capital letters for overcrossing ends.

          The rule for the prime is as follows: for a chord end of type $a$ (potentially it can become $a$ or $a'$) we
look at the opposite chord end ($A$ or $A'$) and look how many
chord ends $c$ precede this $A$ (or $A'$). Depending on the parity, we justify
$a$ (or $a'$). We do exactly the same for $b,b'$: look at the opposite end ($B$ or $B'$)
and count the parity of the number of preceding $c,c'$.

More precisely, we put prime in those cases when the respective parity is odd.

As for the capital letters $A,A',B',B'$ we just swap capital and small letters:
e.g., for a chord end of type $A$ (it may become $A$ or $A'$)
we look at the opposite chord ($a$ or $a$) and look how many chord ends $C$ precede this $a$
(or $a'$).

We denote the obtained word by $w(D)$.

The interested reader can ask {\em why do we conut the number of chord ends before the given
chord end, not after?} The answer is: if we replace ``before'' with ``after'', this will work nicely 
as well. Hence, the invariant $w$ to be constructed will have its ``twin'' $w_{after}$.
To see the knots where these two definitions are equal, we introduce the following
\begin{definition}
A knot is {\em $c$-even} (resp., {\em $a$-even, $b$-even}) if the number of chords of index $c$ of some (hence, any)
of its representatives is even (resp., the number of $a$-chords, $b$-chords).
Otherwise, we say that knot diagram (hence, knot) is $c$-odd (resp., $a$-odd, $b$-odd)).
\end{definition}

\begin{remark}
Oddness in any of these senses yields non-triviality and non-sliceness in various senses. 
\end{remark}

Our next goal is to understand the relations in the group are forced by Reidemeister moves.

\subsection{Reidemeister moves. Group relations}
            Now we want to apply Reidemeister moves and see which relations we get for the
letters.

{\bf Reidemeister 1 move.}

Nothing happens at all.
{\bf Reidemeister 2 move.} Two neigbouring chord ends have the same index, they are either both
over or both under, and they either both have prime or both do not have prime. Hence, we have two cancellations of types $aa=1, ..., B'B'=1$.

{\bf Reidemeister 3.} 

We have one of the three options:

$I$ All chords have trivial index. Nothing happens to any of our words.
 
$II.$ 
There are two chords of the same index  taking part in the move.
Then the third chord has trivial index.
This leads to a commutativity of letters of the same index:

$$[a,A]=[a,A']=[a',A]=[a',A']=[b,B]=[b,B']=[b',B]=[b',B']=1 $$

and

$$[a,a']=[b,b']=[A,A']=[B,B']=1.$$

$III.$ Finally, assume that all three chords $a,b,c$ are present.
Now, we make an important observation that
{\bf among the three pieces (ab),(ac),(bc) there is one piece with two consequent overcrossings,
one piece with two consequent undercrossings and one piece with an overcrossing and an undercrossing}.

 We are interested in three pieces of chord diagrams:
the (ab)-piece, the (ac)-piece and the (bc)-piece.

Note that in the (ac) piece the letter of type a does not change at all.
Indeed: looking at the chord end of index $a$ (it may be one of $a,a', A,A'$),
the opposite chord end (say, $A$) is adjacent to some chord end of index $b$ (not $c$!)
So, after the Reidemeister move the count of $c$ does not change,
which means that the letter of type $a$
in the (ac)-piece stays the same.
As well as the letter b in the (bc)-piece.

So, the only interesting thing happens in the subword
(ab) (which can be,say, $aB'$ or $BA'$: 32 variants in total).

Let us see in more details what happens to such a word if we apply the third Reidemeister moves.

Here we have two subcases

$III.a$ First, if one of the letters are capital and the other is small then this leads to a commutativity:
$[a,B]=[a,B']=[b,A]=[b,A']=[a',B]=[a',B']=[b',A]=[b',A']=1.$

Indeed, if one of the letter is capital and the other is small (say, $A$ and $b$), then in the other two
pieces we have

($a$ or $a'$) near $c$
and
($B$ or $B'$) near $C.$

Indeed, for a true third Reidemeister move there should be {\bf one over-piece and one under-piece}\footnote{
unlike the $\Delta$-move where the
situation with three pieces like
$aB, bC, cA$ is possible.}.

This is the third Reidemeister move not $\Delta$ move: one of the three pieces has to be twice ``over'', the other
has to be twice ``under''.

This means that none of the two letters (in our case $A,b$) acquires (looses) the prime.
Which means that we have just commutativity.

$III.b$ The most interesting case is when both letters of types $a,b$ on the $a,b$ piece are small
or both are capital.
In this case exactly one of them acquires (looses) its prime, which means that we get the following  relations:

$$
ab=b'a=ba'=a'b',
ba=a'b=ab'=b'a',
$$

$$
AB=B'A=BA'=A'B',
BA=AB'=AB'=B'A'.
$$

Denote the group with the above generators and relations by $D$.

We formalise the above consideration to the following
\begin{thm}
$w(K)\in D$ is an invariant of long knots valued in $D$, more precisely, if $K,K'$ are equivalent
long knots (in the theory with two parities) then $w(K)=w(K')$ as elements of $D$.
\label{th1}
\end{thm}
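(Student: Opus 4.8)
The plan is to verify invariance of $w(K)$ under each of the generating moves of the equivalence relation on long knots: planar isotopy, detour moves, and the three Reidemeister moves, and to observe that the group relations collected in the preceding subsection have been chosen precisely so that the word is unchanged (as an element of $D$) under each move. Since a long knot diagram in a theory with two parities is determined up to these moves, establishing invariance move-by-move suffices.

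First I would dispose of the easy cases. Planar isotopy and detour moves do not alter the cyclic (here linear, since we have a basepoint) order of chord ends of indices $a,b,c$, nor any over/under data, nor the counts of preceding $c$-ends; hence $w(D)$ is literally unchanged as a word. Reidemeister 1 creates or destroys a chord of trivial index (a small curl is even in both parities, since its two ends are adjacent), so it contributes no letters and, crucially, does not change the parity counts governing the primes of any other chord end — so $w$ is untouched. For Reidemeister 2, the two created (or removed) chord ends carry the same index, the same over/under status, and — because they are adjacent, hence see the same set of preceding $c$-ends — the same prime decoration; so the word changes by inserting or deleting a subword of the form $xx$ with $x\in\{a,a',A,A',b,b',B,B'\}$, or a pair of trivial-index letters, which is killed by the relation $xx=1$ already imposed.

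The heart of the matter is Reidemeister 3, and here the work is to check that the three distinct \emph{pairs} of chord ends that change their cyclic position during the move contribute, before and after, words that are equal in $D$. I would organize this exactly as the excerpt does: case $I$ (all trivial indices) changes nothing; case $II$ (a repeated index, forcing the third chord trivial) only permutes letters of the same index past each other and past trivial letters, which is covered by the commutativity relations $[a,A]=\dots=[a,a']=\dots=1$; case $III$ (indices $a,b,c$ all present) is the essential one. In case $III$ one uses the stated structural fact that among the three arcs $(ab),(ac),(bc)$ one is over-over, one under-under, one mixed; this forces the $a$-letter on the $(ac)$-arc and the $b$-letter on the $(bc)$-arc to keep their primes (their partner ends are adjacent to a $b$- or $a$-end, not a $c$-end, so the preceding-$c$ count is unchanged), so only the $(ab)$-subword is in play. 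Splitting into $III.a$ (one capital, one small: the partner configuration shows neither acquires nor loses a prime, giving pure commutativity $[a,B]=\dots=1$) and $III.b$ (both small or both capital: exactly one of the two flips its prime, giving the braided relations $ab=b'a=ba'=a'b'$ and their capital analogues) completes the verification, since each of these identities holds by construction in $D$.

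The main obstacle I anticipate is the bookkeeping in case $III.b$: one must be careful that ``exactly one'' of the two letters changes its prime (not zero, not both), and that it is the \emph{correct} relation among the four in the display that is invoked for each of the $32$ sign/over-under configurations of the $(ab)$-subword. The clean way to handle this without $32$ separate checks is to note that the over-over / under-under / mixed dichotomy pins down which of the two ends on the $(ab)$-arc has its partner adjacent to a $c$-end (versus a $C$-end) and on which side, so the prime-parity change is forced to land on exactly one letter; then symmetry under the involutions ``swap primes,'' ``swap $a\leftrightarrow b$,'' and ``swap small$\leftrightarrow$capital'' reduces everything to a single representative computation, with the relations of $D$ being exactly the orbit of that one identity. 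Once this is in hand, invariance under all moves is established and the theorem follows.
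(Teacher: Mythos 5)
Your proposal follows essentially the same route as the paper: the paper's proof of Theorem~\ref{th1} is precisely the preceding move-by-move analysis (R1 trivial, R2 giving the cancellations $xx=1$, and R3 split into cases $I$, $II$, $III.a$, $III.b$ via the observation that among the arcs $(ab),(ac),(bc)$ one is over--over, one under--under, one mixed, so only the $(ab)$-subword can change), and your case structure, justifications, and invoked relations of $D$ match it point for point. Your extra care about verifying that exactly one prime flips in case $III.b$ (via the symmetry reduction) is a reasonable way to make explicit a step the paper simply asserts.
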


\begin{remark}
The same is true for the invariant $w_{after}$.
\end{remark}

\begin{remark}
It is important to note that for a chord of type b the presence of prime on
one end doesn't mean that the prime exists on the other end. Namely, $b$ might be coupled with $B'$,
$b'$ might be coupled with $B$.
\end{remark}




\subsection{Simplifying the group}
How to handle the above group.
We denote the above group by

\begin{eqnarray*}
D=\langle a,b,a',b',A,B,A',B'&|& aa=1, ..., B'B'=1,\cr
&&[a,a']=[a,A']=... = [A,A']=1,\cr
&&[b,b']=[b,B']=... = [B,B']=1,\cr
&&[a,B]=[a,B']=[a',B]=[a',B']= \\
&&=[b,A]=[b,A']=[b',A]=[b',A']=1; \cr
&&ab=b'a=ba'=a'b',\cr
&&ba=ab'=a'b=b'a',\cr
&&AB=B'A=BA'=A'B',\cr
&&BA=AB'=A'B=B'A'
\rangle.\end{eqnarray*}

To simplify the presentation of the group $D$, we just express all letters with
primes in terms of the letters without primes just by conjugation, say, $a'=bab$.

Substituting $a'=bab$ to $[a,a']=1$, we get 
$$(ab)^{4}=1;$$ we get the same for
substituting $b'=aba$ to $[b,b']=1$ and similarly, we get
$$(AB)^{4}=1$$ for $[A,A']=[B,B']=1$.

One can easily check that the above two relations are sufficient.
For example, $ab=a'b'$ yields $ab=bababa$ which is equivalent to $(ab)^{4}=1,$
or, say,
$b'ab' = a'$ means that
$aba a aba = bab$ which yields $ababa = bab \Longleftrightarrow (ab)^{4} =1.$

The same happens with $A$ and $B$.

Hence, our group $D$ is isomorphic to the direct sum of two Coxeter groups:

$$ D = \langle a,b|a^{2}=b^{2}=(ab)^{4}=1\rangle \oplus \langle A,B|A^{2}=B^{2}= (AB)^{4}=1\rangle.$$

%



\subsection{What to do with compact knots}

Here we restrict ourselves only to {\em knots with even number of chords of index $c$}.

We recall that Vassiliev invariants have Viro-Polyak combinatorial formulae (Goussarov's theorem, \cite{GPV})
which count some combinations of subdiagrams in the Gauss diagram.

The simplest example is the combinatorial formula for the second Conway's coefficient $c_{2}$ 
counts the subdiagrams  for the {\em long knot} as shown in Fig. \ref{Fig1}:

\begin{figure}

\centering\includegraphics[width=110pt]{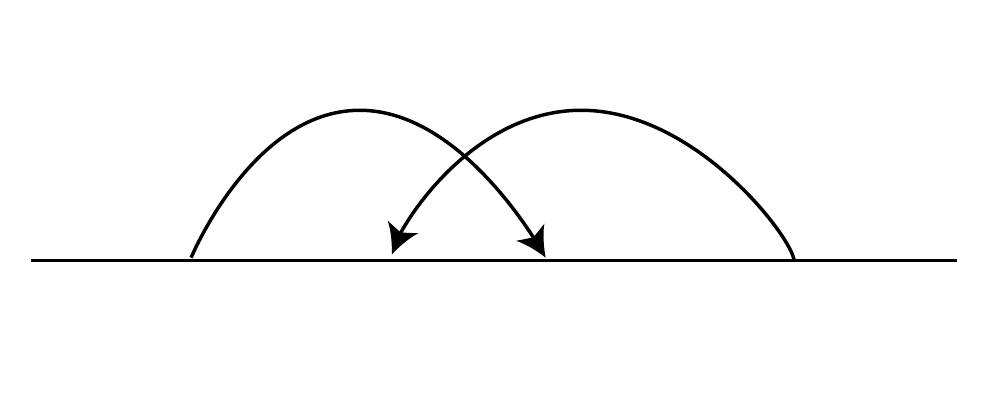}
\caption{The diagram from the combinatorial formula for $c_{2}$}
\label{Fig1}
\end{figure}

Interestingly, this formula uses long knots (not compact knots) and the proof that it gives an invariant
of compact classical knots uses some non-local argument: we should the leftmost chord end and take it
to the rightmost position.

Note also that this trick works only with {\em classical} knots and {\em some classes of virtual knots};
generally, virtual knots are not the same as long virtual knots, see \cite{Long}.

In a similar way, we have to restrict the class of {\em compact knots} in consideration to {\em $c$-even knots}.

Namely, in the second part of this section we deal only with $c$-even knots.

\begin{thm}
Let $K,K'$ be two isotopic oriented $c$-even virtual knots. Then $w(K)$ and $w(K')$ can be obtained from
each other by 
conjugations and automorphisms of $D$ of the following forms:

$$\phi: D\to D, \phi(a)=bab,\phi(b)=aba, \phi(A)=A,\phi(B)=B;$$

$$\psi: D\to D, \phi(a)=a,\phi(b)=b, \phi(A)=BAB,\phi(B)=ABA;$$

$$\chi: D\to D, \chi=\phi\circ\psi=\psi\circ \phi.$$
\end{thm}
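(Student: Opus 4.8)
The plan is to reduce the statement for compact $c$-even knots to the already-established invariance for long knots (Theorem~\ref{th1}), controlling what happens when the reference point is dragged around the whole circle. First I would recall that any two long-knot representatives of the same compact virtual knot $K$ differ by a finite sequence of (i) Reidemeister moves performed away from the base point, and (ii) ``base-point slides'', each of which pushes the base point across a single chord end. By Theorem~\ref{th1}, moves of type (i) do not change $w$ at all, so the entire content is to determine the effect of a single base-point slide on the word $w(D)\in D$.

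Next I would analyze the base-point slide across one chord end. Writing $w(D)=x_{1}\cdots x_{n}$ with $x_{1}$ the letter nearest the old base point, sliding the base point past $x_{1}$ replaces the word by $x_{2}\cdots x_{n}x_{1}'$, where $x_{1}'$ is the letter $x_{1}$ \emph{recomputed} with respect to the new cyclic position. The cyclic shift itself is a conjugation in $D$ (it sends a word to a conjugate of itself up to the recomputation issue), so the real point is how the prime bookkeeping changes. Here the $c$-even hypothesis is essential: the rule for deciding whether a type-$a$ end carries a prime counts, with parity, the number of $c$-ends preceding the \emph{opposite} end $A$ (and symmetrically $C$-ends preceding $a$ for capitals, and $c,C$-ends for $b,B$). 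When the base point crosses a non-$c$, non-$C$ end, no count changes, so $x_{1}'=x_{1}$ and we get an honest conjugation. When the base point crosses a $c$- or a $C$-end, the count of preceding small-$c$'s (resp.\ capital-$C$'s) seen by \emph{every} opposite end jumps by one modulo $2$; since the total number of $c$-chords is even, crossing a small-$c$ end and later (when the base point returns) the matching $C$-end restores everything, and I would show that the net effect after a full traversal, or after any slide, is precisely to toggle the prime on \emph{all} the $a$-letters simultaneously (which is the automorphism $a\mapsto bab=a'$, $b\mapsto aba=b'$ fixing $A,B$, i.e. $\phi$) and/or on all the $A$-letters simultaneously ($\psi$), or both ($\chi=\phi\psi$); crossing a small-$c$ end triggers the capital toggle on the opposite-side count and crossing a capital-$C$ end triggers the small toggle, which is exactly the asymmetry built into the definition. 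Combined with the cyclic-shift conjugation, a single base-point slide therefore acts on $w(K)$ by a conjugation followed by one of $\mathrm{id},\phi,\psi,\chi$, and a composition of such moves again has this form because $\phi,\psi,\chi$ are central in the relevant sense (they commute with each other and conjugation by any element composes with them in the obvious way).

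Finally I would package this: any two $c$-even long-knot representatives of $K$ and $K'$ (if $K=K'$) are joined by a sequence of type-(i) moves (trivial on $w$) and base-point slides (each contributing a conjugation and a factor in $\{\mathrm{id},\phi,\psi,\chi\}$), so $w(K')$ is obtained from $w(K)$ by an overall conjugation and an overall automorphism in $\{\mathrm{id},\phi,\psi,\chi\}$, which is exactly the claim. I expect the main obstacle to be the careful parity accounting in the base-point-slide step: one must verify that when the base point crosses a single $c$- or $C$-end, the induced change on \emph{every} letter of the affected family is the \emph{same} toggle (so that it assembles into the global automorphism $\phi$ or $\psi$ rather than some ad hoc per-chord modification), and that the interplay between ``small vs.\ capital'' ends and the $c$-even condition makes the remaining discrepancies cancel. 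The well-definedness of $\phi,\psi,\chi$ as automorphisms of $D$ is immediate from the simplified presentation $D=\langle a,b\mid a^{2}=b^{2}=(ab)^{4}=1\rangle\oplus\langle A,B\mid A^{2}=B^{2}=(AB)^{4}=1\rangle$, since $bab,aba$ satisfy the same Coxeter relations as $a,b$, so that part is routine.
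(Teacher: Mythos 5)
Your overall route is the same as the paper's: by Theorem \ref{th1} only base-point slides matter, and you analyse the slide past a single chord end, expecting conjugation for trivial/$a$/$b$ ends and a global prime-toggle ($\phi$, $\psi$ or $\chi$) for $c$/$C$ ends. However, the parity accounting --- which you yourself flag as the crux --- goes wrong at exactly the point where the $c$-even hypothesis must be spent. When the base point slides past an end of an $a$- or $b$-chord, it is \emph{not} true that ``no count changes'': the moved end goes from having no $c$-ends (resp.\ $C$-ends) before it to having all of them before it, and this count governs the prime of the letter sitting at the \emph{opposite} end of that chord, which lies somewhere in the interior of the word. Without $c$-evenness that single interior letter would toggle its prime, and the result would not be a conjugate of $w$, nor a conjugate composed with $\phi,\psi,\chi$. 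The $c$-even hypothesis is precisely what makes this jump even, hence invisible modulo $2$. This is also why your framing ``$x_1'$ is $x_1$ recomputed'' misidentifies the letter at risk: the moved letter's own prime is computed at its stationary partner end and never changes; it is the partner letter inside $x_2\cdots x_n$ that must be protected by evenness.

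Conversely, you spend the evenness hypothesis on the $c$/$C$-slide case, where it is not needed: sliding past a small $c$-end decreases by one the number of $c$-ends preceding \emph{every} other end, so the primes of all small letters toggle simultaneously, and likewise a $C$-end slide toggles all capital letters; no cancellation argument over a full traversal is required, and each single slide already yields an allowed transformation. (Note also that your assignment appears inverted: since the prime of a small letter is decided by counting small $c$-ends before its opposite capital end, crossing a small $c$ toggles the \emph{small} letters, i.e.\ gives $\phi$, and crossing a capital $C$ gives $\psi$; this does not affect the conclusion, as both maps are on the allowed list, but it is a symptom of the same bookkeeping slip.) With the evenness relocated to the $a$/$b$-slide case, the rest of your argument --- that compositions of conjugations with $\phi,\psi,\chi$ again have the stated form, and that $\phi,\psi,\chi$ are automorphisms by the Coxeter presentation of $D$ --- is fine and matches the paper's proof.
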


\begin{proof}
From Theorem \ref{th1} we only have to check what happens to the word $w(K)$ if we change the
reference point, namely, if the move the leftmost point to the right.

If this chord has trivial index then nothing happens to prime arrangement nor to the word.

If this chord has index $c$ then the evenness of the knot comes into play:
for each chord of index $a$, for its chord ends $a$ and $A$ it does not matter
whether we count chords of index $c$ before it or after it. 
So, moving $c$ from one end to the other end) swaps primes for all
$a,a',b,b'$ and preserves all primes for $A,A',B,B'$. Hence our word undergoes the map $\phi$.

If we move $C$ then we the word undergoes the map $\psi$.

Finally, if we move some chord of index $a$ or $b$ primes stay the
same as before, but the word undergoes conjugation.

\end{proof}

\section{Post Scriptum. On cobordisms}

Actually, all indices, capital letters, and primes are defined in a way which may be
generalised from knots to $2$-knots with boundary. So, potentially this will work for 
cobordisms between two knots provided that these cobordisms respect the two parities.

We shall formulate our next goal in the form of 

{\bf Conjecture.}
$w(D)$ is a sliceness obstruction for $2$-knots with two parities.

\end{document}